\documentclass[12pt, a4paper]{amsart}

\usepackage{ifthen}
\usepackage{tikz}

\usepackage{dsfont} 

\newtheorem*{rep@theorem}{\rep@title}
\newcommand{\newreptheorem}[2]{%
\newenvironment{rep#1}[1]{%
 \def\rep@title{#2 \ref{##1}}%
 \begin{rep@theorem}}%
 {\end{rep@theorem}}}
\makeatother

\usepackage{breqn}
\usepackage{amscd,amsmath,amssymb,amsthm,amsfonts}
\usepackage[alphabetic]{amsrefs}
\usepackage{mathrsfs}
\usepackage[shortlabels]{enumitem}
\usepackage[all]{xy}

\usepackage{thmtools}

\usepackage{xcolor}
\usepackage[hypertexnames=true,colorlinks = true, allcolors=blue,linktoc = all, pdffitwindow = false, urlbordercolor = white]{hyperref}%

\usepackage{hyperref}
\usepackage{graphicx} 

 \newtheorem{lettertheorem}{Theorem}

\def\antidiag{\operatorname{antidiag}}

\def\End{\operatorname{End}}

\def\supp{\operatorname{supp}}
\def\ker{\operatorname{ker}}

\def\dim{\operatorname{dim}}
\def\antidiag{\operatorname{\nabla}}
\def\diag{\operatorname{\Delta}}

\def\id{\operatorname{id}}

\def\Im{\operatorname{Im}}

\def\id{\operatorname{id}}
\def\supp{\operatorname{supp}}

\def\Spec{\operatorname{Spec}}

\def\Irr{\operatorname{Irr}}

\def\C{\mathbb{C}}

\def\R{\mathbb{R}}

\def\Z{\mathbb{Z}}
\def\T{\mathbb{T}}

\newcommand{\IC}[0]{\mathbb{C}}

 \newcommand{\IN}[0]{\mathbb{N}}
 
 \newcommand{\IR}[0]{\mathbb{R}}
 \newcommand{\IT}[0]{\mathbb{T}}

 \newcommand{\IZ}[0]{\mathbb{Z}}

 \newcommand{\CB}[0]{\mathcal{B}}
 
 \newcommand{\CF}[0]{\mathcal{F}}
 \newcommand{\CH}[0]{\mathcal{H}}
 
\newcommand{\CK}[0]{\mathcal{K}} 
 
\newcommand{\CO}[0]{\mathcal{O}} \newcommand{\CP}[0]{\mathcal{P}}
 
 \newcommand{\CT}[0]{\mathcal{T}}

\DeclareMathOperator{\Rep}{Rep}

\newcommand{\TL}[2]{%
\vcenter{\hbox{\begin{tikzpicture}
\foreach \x/\y/\z/\w in {#2} {
  \ifthenelse{\x = \z \AND \y = \w}
  {}
  {
    \ifthenelse{\y = \w}
    {\ifthenelse{\y = 0}
      {\draw (\x, \y) .. controls +(0, 0.5) and +(0, 0.5) .. (\z, \w);}
      {\draw (\x, \y) .. controls +(0, -0.5) and +(0, -0.5) .. (\z, \w);}
    }
    {\draw (\x, \y) .. controls +(0, 1) and +(0, -1) .. (\z, \w);}
  }
}
\foreach \x/\y/\z/\w in {#2} {
  \fill (\x, \y) circle (2pt); 
  \fill (\z, \w) circle (2pt); 
}
\draw (-0.5, 0) rectangle ({#1 - 0.5}, 1);
\clip (-0.6, 0) rectangle ({#1 - 0.6}, 1);
\end{tikzpicture}
}}}

\newreptheorem{theorem}{Theorem}
\newtheorem{theorem}{Theorem}[section]
\newtheorem*{theorem*}{Theorem}
\newtheorem*{proposition*}{Proposition}
\newtheorem{proposition}[theorem]{Proposition}
\newtheorem{lemma}[theorem]{Lemma}
\newtheorem*{lemma*}{Lemma}
\newtheorem{example}[theorem]{Example}

\newtheorem{definition}[theorem]{Definition}

\newtheorem{remark}[theorem]{Remark}

\newtheorem*{notations}{Notations}

\numberwithin{equation}{section}

\begin{document}

 
 \title[The simplest Motzkin subproduct]{The Cuntz-Pimsner algebra of the simplest Motzkin subproduct system is $2$-subhomogeneous}
\author{Valeriano Aiello} 
\address{Valeriano Aiello, Dipartimento di Matematica, Universit\`a di Roma La Sapienza, P.le Aldo Moro 5, 00185 Roma, Italy 
}
\email{valerianoaiello@gmail.com} 
\author{Arnaud Brothier} 
\address{Arnaud Brothier,  University of Trieste, Department of Mathematics,
via Valerio 12/1, 34127, Trieste, Italy and School of Mathematics and
Statistics, University of New South Wales, Sydney NSW 2052, Australia, 
\url{https://sites.google.com/site/arnaudbrothier/}
}
\email{arnaud.brothier@gmail.com} 

\begin{abstract} 
The Motzkin subproduct system (SPS) is 
 constructed from the Jones-Wenzl idempotents of the Motzkin algebras, which generalizes  the Temperley-Lieb SPS.
The simplest  Motzkin SPS, which is not a Temperley-Lieb SPS, is constructed from a $3$-dimensional Hilbert space.
We explicitly describe the spectrum of the corresponding Cuntz–Pimsner algebra, which remarkably admits only irreducible representations of dimensions \(1\) and \(2\), and can thus be viewed as a mildly quantum space.
We moreover analyse representations of the Richard Thompson groups and of the Cuntz algebra that are associated to this spectrum.
\end{abstract}

\maketitle

\section*{Introduction}

The full Fock space over an \(n\)-dimensional Hilbert space has been, either directly or indirectly, a source of interesting   C\(^*\)-algebras. 
Indeed, one defines the usual creation and annihilation operators, which, together with the identity, generate the Toeplitz(-Cuntz) C\(^*\)-algebra \cite{Cuntz}.
This algebra contains the compact operators as a closed ideal and the corresponding quotient is isomorphic to the Cuntz(-Dixmier) algebra \(\mathcal{O}_n\) \cite{Dixmier64,Cuntz}.

A seminal generalization of this picture is due to Arveson \cite{Arveson98}, who replaced the full Fock space with the subspace consisting of symmetric tensors. 
The corresponding compressions of the creation and annihilation operators, together with the identity, generate a new Toeplitz algebra that again contains the compact operators as an ideal. In this case, however, the quotient  turns out to be the C\(^*\)-algebra of continuous functions on a sphere.

This framework was subsequently extended to the setting of subproduct systems. These are collections of finite-dimensional Hilbert spaces  
$\{H_k\}_{k\geq 0}$
satisfying certain compatibility axioms \cites{Bhat, Shalit}. Such systems give rise to closed subspaces of the full Fock space, and hence to associated orthogonal projections. The corresponding compressed creation and annihilation operators generate a Toeplitz algebra containing 
an ideal consisting of compact operators, and the quotient is known as a Cuntz–Pimsner algebra.

In the spirit of the noncommutative algebraic geometry program initiated in \cites{Popescu, Popescu2}, subproduct systems admit a purely algebraic interpretation: there is a one-to-one correspondence between subproduct systems and homogeneous ideals in the free algebra \(\mathbb{C}\langle X_1, \dots, X_n\rangle\), providing a noncommutative analogue of the classical Nullstellensatz, \cite{Shalit}*{Proposition 7.2}.

Arveson's original subproduct system carried a natural \(U(2)\)-action, making it equivariant. Later, Arici and Kaad \cite{AK} introduced and studied a family of \(SU(2)\)-equivariant subproduct systems. 
Another source of examples arise from Jones's planar algebras and in particular the Temperley-Lieb(-Jones) algebras \cite{Jones83,jo2}. 
First Habbestad and Neshveyev constructed subproduct systems from the Temperley-Lieb algebras and specifically from the ranges of Jones–Wenzl idempotents in suitable representations \cites{Nesh, Nesh2, Nesh3}. These subproduct systems are called 
Temperley-Lieb subproduct systems.
A key novelty  is that they admit quantum group symmetries.

The Motzkin algebras are natural generalisation of the Temperley–Lieb algebras \cite{Halverson}.
A new family of subproduct systems built from Motzkin algebras was introduced in \cite{ADR01}. 
While this construction recovers the earlier Temperley–Lieb systems as a special case, the presence of a quantum group symmetry remains an open question. 
Both the Temperley–Lieb and Motzkin subproduct systems yield universal Toeplitz and Cuntz–Pimsner algebras. 
Notably, all the aforementioned examples correspond to homogeneous ideals generated by a single quadratic polynomial.

In this article we study the lowest dimensional Motzkin subproduct system which is not a Temperley-Lieb subproduct system. We refer to this object as the \emph{simplest Motzkin subproduct system}. 

Recall that a \(C^*\)-algebra is called \(n\)-subhomogeneous if the dimensions of its irreducible representations are uniformly bounded above by \(n\). Our main result, stated below, describes the fine structure of the Cuntz–Pimsner algebra attached to the simplest Motzkin subproduct system: it is 2-subhomogeneous.
This algebra occupies an intermediate position in the hierarchy of examples.
Indeed, our algebra is not commutative unlike Arverson spherical quotient algebras but does not carry sophisticated quantum group structure like the algebra of Habbestad and Neshveyev.
To set the notations, we denote by $\IT$ the set of complex numbers of modulus $1$.
\begin{lettertheorem}\label{maintheo}
The Cuntz-Pimsner algebra $\CO_P$ of the simplest Motzkin subproduct system is $2$-subhomogeneous.
More precisely, the spectrum $\Spec$ of $\CO_P$ decomposes into $\Spec(1)\sqcup \Spec(2)$ where the 1-dimensional part of the spectrum $\Spec(1)$ is homeomorphic to two disjoint circles and $\Spec(2)$ is homeomorphic to the 3-dimensional object
$$\dfrac{[0,1/\sqrt 2]\times \IT^2\setminus (C_+\cup C_-)}{(0,u,v)\sim (0,u,\hat v) \text{ and } (1/\sqrt 2, u,v)\sim (1/\sqrt 2, \overline u v^2,v)},$$
where $C_\pm$ is the circle $\{(1/\sqrt 2, u,\pm u):\ u\in\IT\}.$
\end{lettertheorem}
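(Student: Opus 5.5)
The plan is to classify the irreducible representations of $\CO_P$ directly from a presentation by generators and relations. I would start from the description of $\CO_P$ obtained earlier in the paper, which I take as given: it is the universal C$^*$-algebra generated by three operators $S_1,S_2,S_3$ (images of the creation operators for an orthonormal basis of the $3$-dimensional Hilbert space). These satisfy the Cuntz–Pimsner relations $\sum_i S_iS_i^*=1$ and $\sum_i S_i^*S_i=$ a fixed scalar multiple of $1$ (the one dictated by the dimension growth of the $H_k$). They also satisfy the single quadratic relation $p(S_1,S_2,S_3)=0$ coming from the generator of the homogeneous ideal, namely the orthogonal complement of the range of the Motzkin Jones–Wenzl idempotent in $H\otimes H$; concretely $p$ mixes the monomials $X_1X_3$, $X_3X_1$ and $X_2^2$. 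Combining $p$ with its adjoint and with the two Cuntz–Pimsner relations, I will first derive algebraic consequences inside $\CO_P$: that $S_2$ is normal (or a scalar multiple of a unitary on its support), that the positive operators $S_1S_1^*$, $S_3S_3^*$, $S_1^*S_1$, $S_3^*S_3$ commute with each other and with $|S_2|$, and that $S_1$ and $S_3$ intertwine the spectral subspaces of these positive operators in a way that permutes at most two of them.

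The key structural step is to exhibit a central positive element $h$ (a polynomial in the $S_iS_i^*$ and $S_i^*S_i$) whose value in an irreducible representation $\pi$ is a scalar $t\in[0,1/\sqrt2]$. I would then show that on the eigenspaces of a second, non-central self-adjoint element $k$ the generators act as matrices of size at most two. The method is to use the relations to prove that $k$ satisfies a quadratic polynomial with coefficients in the centre, and that $S_1,S_2,S_3$ preserve the span of its two eigenspaces. Irreducibility then forces $\dim\pi\le 2$, which already gives $2$-subhomogeneity. A more conceptual alternative, which I would also try, is to verify the standard polynomial identity $s_4=0$ on $\CO_P$ via the same commutation relations.

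With $\dim\pi\le2$ established, I parametrise $\pi$ explicitly. For $\dim\pi=1$ the $S_i$ are scalars $z_i$ satisfying $\sum|z_i|^2=1$, the second Cuntz–Pimsner normalisation, and $p(z)=0$. Solving shows that $z_2=0$ and $z_1,z_3$ lie on two circles (one for each sign choice), giving two disjoint copies of $\IT$. For $\dim\pi=2$, I choose a basis diagonalising $k$ and use the remaining unitary freedom to reduce to a normal form described by $(t,u,v)\in[0,1/\sqrt2]\times\IT^2$, where $t$ is the off-diagonal modulus and $u,v$ are phases. I then determine when two normal forms are unitarily equivalent, and when the representation fails to be irreducible.
\begin{itemize}
\item At $t=0$ the normal form is only defined up to the residual phase symmetry, which produces the identification $(0,u,v)\sim(0,u,\hat v)$.
\item At $t=1/\sqrt2$ the eigenvalues of $k$ collide, so an extra basis change becomes available and yields $(1/\sqrt2,u,v)\sim(1/\sqrt2,\overline u v^2,v)$.
\item On the circles $C_\pm$ the $2\times2$ representation is reducible: it splits into two characters belonging to $\Spec(1)$, so these points must be removed.
\end{itemize}
Finally, continuity of the parametrisation and the hull–kernel topology (checked via convergence of $\operatorname{tr}\pi(a)$ for polynomials $a$ in the generators) give the homeomorphism with the quotient space in Theorem~\ref{maintheo}.

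The main obstacle will be the middle step: extracting from the single quadratic relation enough commutation information to bound the dimension of irreducible representations. The boundary identifications are a secondary difficulty, because they require a careful case analysis of degenerate eigenvalue configurations. For the middle step I would first compute in the smallest faithful-looking representations, namely the $2\times2$ ones found by hand, and guess the central element $h$ and the element $k$ from there, and only then prove their properties abstractly.
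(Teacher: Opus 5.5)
\textbf{There is a genuine gap here, in fact two.}

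First, your starting presentation is not the algebra in question. The subproduct system is $H_k=g_kH^{\otimes k}$ with $g_1=1-p_1$, so the vector $v=(v_1+v_3)/\sqrt2$ is projected out. Hence $H_1=(1-p)H$ is $2$-dimensional and $\CO_P$ has \emph{two} generators. The presentation the paper actually uses is $a^*a+b^*b=a^*a+aa^*=b^*b+bb^*=1$, $a^2=b^2$, $ba^*=b^*a$. Besides the quadratic relation, this contains the individual relations $b^*b=aa^*$, $bb^*=a^*a$ and the mixed relation $ba^*=b^*a$. None of these is captured by ``one quadratic relation in $X_1,X_2,X_3$ plus $\sum_i S_i^*S_i$ scalar''. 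As a consequence, your one-dimensional computation gives the wrong answer. The characters are $(z,\pm z)/\sqrt2$ with $z\in\IT$, so both coordinates have modulus $1/\sqrt2$ and none vanishes. Likewise, the normality and commutation statements you plan to derive in the first step concern an algebra that is not $\CO_P$.

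Second, and more seriously, the dimension bound, which is the heart of the theorem, is not established by the mechanism you describe. With the correct relations your $h$ and $k$ do exist. The element $h:=a^*a\,aa^*=b^*b\,bb^*$ is central: use $a(a^*a)=(aa^*)a$ together with $a^*a=1-aa^*$, and the same for $b$. The element $k:=a^*a$ satisfies $k^2-k+h=0$. But in an irreducible representation the span of the two eigenspaces of $k$ is all of $H$, so ``the generators preserve that span'' is vacuous. What you actually get is $H=H_t\oplus H_{t'}$ with $a,b$ swapping the two eigenspaces, i.e.\ $2\times 2$ block matrices with entries in $B(K)$, $K\cong H_t$. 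Nothing in this bounds $\dim K$, so ``irreducibility forces $\dim\pi\le 2$'' does not follow.

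The missing idea is to show that the block entries generate an abelian algebra. The paper does this in three steps:
\begin{enumerate}
\item It first shows that the polar parts $U,V$ can be taken unitary. This is a kernel-dimension lemma, which needs a separate argument when $a^2=0$.
\item It normalises $U=\nabla(1,X)$, $V=\nabla(Y,Z)$ and reads $a^2=b^2$ as $tt'X=tt'YZ=tt'ZY$. This gives commutativity when $t\neq 0$, while $t=0$ is handled directly via $A=\nabla(1,0)$, $B=\nabla(0,Z)$.
\item At $t=1/\sqrt2$ your $k$ is a scalar and gives no information at all. There the paper uses that $U^2=V^2$ is central, hence scalar, so $U,V$ become a pair of symmetries and Halmos' two-projection theorem applies.
\end{enumerate}
Your fallback of verifying $s_4=0$ merely restates the claim.

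A lesser point concerns the homeomorphism. Continuity via traces is fine, but you also need closedness. The paper obtains it from compactness of $X_{00}\times U(2)$ and the fact that limit points lying in $C_\pm$ give reducible representations.
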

We observe that the whole spectrum is not Hausdorff: the net $(p,u^2, \pm u^2)$, with $p\nearrow 1/\sqrt{2}$, has two limit points, see Section \ref{sec:proof-main} for details.

Finally, a functorial procedure permits to define a representation of the (usual) Cuntz algebra $\CO_2$ from a pair of operators $A,B$ satisfying $A^*A+B^*B=1$ and thus, by restriction, unitary representations of the Richard Thompson groups $F,T,V$, see \cite{Brothier-Jones19}.
In particular, any representation of $\CO_P$ defines a representation of $F,T,V$, and $\CO_2.$
We briefly analyse in Section \ref{sec:FTVO} the representations of $F,T,V,\CO_2$ obtained from the spectrum of $\CO_P.$

\section{Cuntz-type algebras of Motzkin subproduct systems}
\subsection{Motzkin algebras}

In this section, we first recall the definition and some key properties of the Motzkin algebras, then we define some representations 
of it. For more detailed information, refer to \cites{Halverson, Ly, Jo21}.

For any real number $\lambda\neq 0$, define the sequence of Motzkin algebras $M_k(\lambda^{-1})$ with $k\geq 1$ natural number as follows.
For $k=1$,  $M_1(\lambda^{-1})$ is the unital universal $*$-algebra generated by a self-adjoint idempotent $p_1$.  
For $k\geq 2$, $M_k(\lambda^{-1})$ is the universal *-algebra generated by $1$, $t_1$, \ldots, $t_{k-1}$, $l_1$, \ldots, $l_{k-1}$, satisfying the following relations \cite{Ly}*{Theorem 4.1}
\begin{enumerate}[start = 0] \label{relations-motzkin}
\item $t_i=t_i^*$,
\item $l_i^2=l_i^3$,
\item $l_i l_{i+1} l_i = l_i l_{i+1} = l_{i+1} l_i l_{i+1}$,
\item $l_{i} l_{i}^* l_{i} = l_{i}$,
\item $l_{i+1} l_{i}^* l_{i} = l_{i+1} l_{i}^* $, $l_{i} l_{i}^* l_{i-1} = l_{i}^* l_{i-1}$,
\item $l_{i} l_i^* = l_{i+1}^* l_{i+1}$,
\item if $|i-j|\geq 2$, one has $l_{i}^* l_{j} = l_j l_i^*$, $l_il_j=l_jl_i$,  $t_it_j=t_jt_i$,
$l_i t_j=t_jl_i$, $l_i^* t_j=t_jl_i^*$, 
\item $t_i^2=t_i$,
\item $t_it_{i+1}t_i=\lambda^{2} t_i$, $t_{i+1}t_{i}t_{i+1}=\lambda^{2} t_{i+1}$, 
\item $t_il_i = t_i l_i^*$,
\item $\lambda t_i l_{i+1}^* = t_i t_{i+1} l_i$,
\item $l_i^* l_{i+1}^* t_i = t_{i+1} l_i^* l _{i+1}^*$,
\item $t_i l_i t_i = \lambda t_i$.
\end{enumerate}
The dimension of  $M_k$ is the $(2k)$-th Motzkin number, whence the name, \cite{Halverson}.
 The elements $1$, $t_1$, \ldots, $t_{k-1}$, along with the relations in (0), (6), (7), (8), provide a presentation of the Temperley-Lieb algebra TL$_k(\lambda^{-1})$. 
Similarly to the Temperley-Lieb algebra, the Motzkin algebra has its 
  own Jones-Wenzl idempotents $g_k\in M_k(\lambda^{-1})$. 
\begin{definition}\cite{Jo21}*{Section 2.4}
 Let $P_m(x)$, $n\geq 0$,  be the Chebyshev polynomials over $\IC$ defined as $P_0(x)=P_1(x)=1$, $P_{m+1}(x)=P_m(x)-xP_{m-1}(x)$. 
$\lambda^{-1}\in \IC$ is said to be $n$-generic if $P_k((\lambda^{-1}-1)^{-2})\neq 0$ for $1\leq k \leq n$ and generic if $P_k((\lambda^{-1}-1)^{-2})\neq 0$ for all $k\in\IN_0$.
For $\lambda^{-1}$ $n$-generic the Motzkin Jones-Wenzl idempotents $g_k\in M_k(\lambda^{-1})$, for $1\leq k \leq n-1$, are defined inductively as 
\begin{align*}\label{recurrence-formula}
g_{k+1}&=g_k(1-p_{k+1})-\frac{\lambda^{-1}}{\lambda^{-1}-1} \frac{P_{k-1}((\lambda^{-1}-1)^{-2}))}{P_k((\lambda^{-1}-1)^{-2}))} g_k t_kg_k\,,
\end{align*}
where $g_1=1-p_1$.
\end{definition}

Note that Motzkin algebras enjoy a practical diagrammatic description just like Temperley-Lieb algebras \cite{Jo21}. 
Though, we will not use it in this article.
 
\subsection{C*-algebras from Motzkin algebra's representations}

This section is devoted to introducing the Motzkin subproduct system, which was defined in \cite{ADR01}. In order to do this, we start by recalling the notion of standard 
subproduct system \cite{Shalit}*{Section 1, Definition 1.1} for the monoid $\IZ_+$.

 \begin{definition}\label{defsub}
A  subproduct system (over the additive monoid $\IZ_+$) is a sequence of Hilbert spaces $\{H_k\}_{k\geq 0}$ such that $H_0=\IC$, 
$\dim H_1<+\infty$, along with a family of 
co-products, namely 
isometries $w_{k,l}: H_{k+l}\to H_k\otimes H_l$ satisfying
the following co-associativity condition
$$
(w_{k,l}\otimes 1)w_{k+l,n}= (1\otimes w_{l,n})w_{k,l+n}\qquad k, l, n\in \IZ_+\, . 
$$
Here the maps $w_{0,n}$  and $w_{n,0}$ are the canonical identifications.
\end{definition}

By \cite{Shalit}*{Lemma 6.1}, we do not harm generality in assuming that $H_0=\IC$, 
$H_p\subset (H_1)^{\otimes p}$,
$H_{p+q}\subset H_p\otimes H_q$,
and the isometries $w_{k,l}$ are the embedding maps. Subproduct systems realized in this way 
are known as  standard subproduct systems.
If we consider the projections $f_n: (H_1)^{\otimes n}\to H_n\subset (H_1)^{\otimes n}$, then the co-associativity condition can be recast as $f_{n+1}$ being smaller (as a projection) than $f_n\otimes 1$ and $1\otimes f_n$, see  \cite{GerholdSkeide}*{Theorem 7.2}.
 
Starting with a finite-dimensional Hilbert space $H$ of dimension $n$,
we can consider a standard subproduct system  with $H_1=H$.
We may then construct the associated Fock space $\CF_\CH :=\oplus_{k\geq 0} H_k$, 
 as a subspace of the full Fock space $\CF(H):=\oplus_{k\geq 0} H^{\otimes k}$.
To this aim, we fix  an orthonormal basis $\{v_i\}_{i=1}^n$ of $H_1$. 
Denote by $e_\CH$ the orthogonal projection of $\CF(H)$ onto $\CF_\CH$.
Fix an orthonormal basis $\{v_i\}_{i=1}^n$ of $H_1$.
For any $v\in H_1$, define the creation operators $T_v: \CF(H)\to \CF(H)$ and
$S_v: \CF_\CH\to \CF_\CH$ by 
\begin{equation*}
\begin{aligned}
&T_v u := v \otimes u, \qquad v \in H_1, \; u \in \CF(H)\\
&T_i:=T_{v_i}\\
&S_i:= e_\CH T_i \upharpoonright_{\CF_\CH}\\
&S_i^* = e_\CH T_i^* \upharpoonright_{\CF_\CH}=T_i^* \upharpoonright_{\CF_\CH}\,.
\end{aligned}
\end{equation*} 

Note that 
$$
1-\sum_{i=1}^n S_iS_i^* = e_0\,,
$$ 
where $e_0$ is the orthogonal projection of  $\CF_\CH$ onto $H_0$. 
\begin{definition}
We denote by $\CT_\CH$ the unital C$^*$-algebra generated by $\{S_v$ : $v\in H_1\}$, which is known as the
 Toeplitz algebra 
$\CT_\CH$
associated with $\CH$.

As $1\in H_0$ is cyclic for $\CT_\CH$,
 the ideal of compact operators $\CK(\CF_\CH)$ is contained in  $\CT_\CH$.
The corresponding quotient 
 $\CO_\CH:=\CT_\CH/\CK(\CF_\CH)$ is the so-called  the Cuntz-Pimsner algebra.  
 \end{definition}
When the subproduct system consists of  $H_k=H^{\otimes k}$, $\CO_\CH$ returns the celebrated Cuntz algebra (with $\dim(H)$ generators), \cite{Cuntz}. 
Recall that this C*-algebra was first discovered by Dixmier and was later extensively studied by Cuntz \cite{Dixmier64,Cuntz}. We refer the reader to the survey \cite{ACRsurvey} for an overview of the research directions developed over the past decades.
Cuntz-Pimsner algebras for subproduct systems were introduced by Viselter in \cite{Viselter12}, generalizing the corresponding concept  for C$^*$-correspondences.

In this paper, we study the subproduct system given by $H_0=\IC$ and $H_k:=g_{k}H^{\otimes k}$ for $k\geq 1$, where the Motzkin algebra $M_k(\lambda^{-1})$ is thought of as being realised concretely in a representation.

\begin{center}
\textbf{From now on we restrict our analysis to $\lambda\in (0,1/3]$, where the Motzkin algebra affords a C$^*$-algebraic structure,  see \cite{Jo21}.}
\end{center}
In order to make formulas more readable, 
we will often adopt an involution map on the finite set $\{1,\ldots, n\}$ defined
as $\bar{i}:=n-i+1$ for any $i$.

\begin{definition}\label{Motzkin_pair} \cite{ADR01}
Let $H$ be a Hilbert space, with $\dim H=n\geq 2$ and fix an orthonormal basis $\{v_i\}_{i=1}^n$.
A Motzkin pair is given by $(v_A, v)$, where $A\in M_{n}(\IC)$ has the form $Av_i=a_i v_{\bar i}$, 
 $v_A:=\sum_i v_i\otimes Av_i\in H\otimes H$ is a (unit) vector  
 such that $\lambda = |a_i a_{\bar i}|=\bar{a}_i a_{\bar i}$ for all $i\in \{1, \ldots, n\}$, $v=\sum_{i=1}^n b_iv_i$ is a unit vector of $H$,  
 satisfying the following conditions
\begin{equation*}\label{eqMotzkinPair}
\begin{aligned}
&\lambda = \bar{a}_i a_{\bar i}\in\IR_+\, , \qquad
\sum_{i=1}^n |a_h|^2= \sum_{i=1}^n |b_h|^2=1\, , \qquad
  \overline{a }_j \bar b_i  b_{\bar j} =     \bar{a}_{\bar i} \bar b_j b_{\bar i} 
\end{aligned}
\end{equation*}
where $1\leq i, j\leq n$.\\
Every Motzkin pair yields a representation 
 $\pi_{v_{A},v}: M_k(\lambda^{-1})\to \CB(H^{\otimes k})$ by setting
for all  $1\leq i, j\leq n$
\begin{equation*}
\begin{aligned}
&p(x)= \langle x,v\rangle v \, ,\\
&t(x\otimes y)= \langle x\otimes y, v_A \rangle v_A\, ,\\
&l(v_i\otimes v_j)= v_j\otimes p(v_{i})\, , \qquad l^*(v_i\otimes v_j)=  p(v_{j})\otimes v_i\, ,
\end{aligned} 
\end{equation*}
and  
\begin{equation*}
\begin{aligned}
&\pi_{v_A,v}(t_i) = \id^{\otimes (i-1)} \otimes t \otimes \id^{\otimes (k-i-1)} & \qquad 1 \leq i \leq k-1 \\
&\pi_{v_A,v}(l_i) = \id^{\otimes (i-1)} \otimes l \otimes \id^{\otimes (k-i-1)} & \qquad  1 \leq i \leq k-1 \\
&\pi_{v_A,v}(p_i) = \id^{\otimes (i-1)} \otimes p \otimes \id^{\otimes (k-i)}  & \qquad  1 \leq i \leq k
\end{aligned}
\end{equation*}
where $\id\in\CB(H)$.
\end{definition} 
\begin{remark}
 The vector $v_A$ introduced above is the so-called \emph{Temperley--Lieb vector}, originally defined in \cite[Definition~1.2]{Nesh}. We point out two minor differences between the conventions adopted in \cite{Nesh} and those used in \cite{ADR01} and in the present paper. First, our vector is normalized. Second, if the parameter of  the Temperley-Lieb algebra of \cite{Nesh} 
is denoted by $\theta$, our parameter $\lambda$ is related to theirs by
$\lambda=\theta^{-2}$. 
\end{remark}

We now recall three families of examples of Motzkin pairs from \cite{ADR01}*{Example 1.7-(iii), p.13}, the third being   the most general.
\begin{example}\label{seriesofexamples}
 \begin{enumerate}
\item[i)] If $\dim H=n=2m+1$,
a Motzkin pair is provided by 
a matrix $A$ such that $\bar{a}_ia_{\bar i}=\lambda$ for all $i\in \{1, \ldots, n\}$, and a vector $v=v_{m+1}$.  
Note that for $n=3$, 
this representation coincides with that   
of \cite{Halverson}*{Section 3.4} on $(\id-p)^{\otimes k}H^{\otimes k}$, where the $k$-th Motzkin algebra is realised as
$\End_{U_q(\mathfrak{gl}_2)}(H^{\otimes k})$.
\item[ii)] In this case we consider a vector
  $v=\sum_{i=1}^n b_i v_i$ with 
$b_i=  b_{\bar{i}}\in\IR$ for $1\leq i\leq n$ and a matrix $A$ with
 $a_i=a_{j}$ for all $i, j\in \supp(v)$.
For example, one can choose
$v= (\sqrt{2})^{-1}v_1+ (\sqrt{2})^{-1}v_n$,  and   
any $a_1$ of absolute value $\sqrt\lambda$. 
\item[iii)] One may also choose vectors with larger supports, such as, for any $r\leq n/2$, a solution is $b_i:=(\sum_{j=1}^r \delta_{i,j}+ \delta_{i,\bar{j}})/\sqrt{2r}$ with $a_1=\ldots =a_r=a_{\bar{1}}=\ldots =a_{\bar{r}}$ and $|a_1|=\sqrt\lambda$.
\end{enumerate}
 \end{example}  

\begin{remark}
Up to a change of basis and possibly changing one entry of the matrix defining the Temperley-Lieb vector, we may recover the subproduct systems 
of \cite{Nesh}.
Indeed, in Example \ref{seriesofexamples}-(i)  gives back the Temperley-Lieb subproduct of \cite{Nesh} produced by an 
even dimensional Hilbert space,
when  $\dim H$ is odd, i.e. $\dim H_1$ even.
Likewise Example \ref{seriesofexamples}-(ii) returns the Temperley-Lieb subproduct system of \cite{Nesh}
produced by an odd dimensional Hilbert space,
when $\dim H$ is even, namely $\dim H_1$ is odd. 
However, if  $\dim H$ is odd, the subproduct system of Example \ref{seriesofexamples}-(ii) is novel.
\end{remark}

\subsection{The case of interest}
In this article we will focus on the simplest Motzkin pair of 
Example \ref{seriesofexamples}-(iii), which is not a Temperley-Lieb subproduct system.
More precisely, we chose
$\dim H=3$ and $r=1$ (which is the same as Example \ref{seriesofexamples}-(ii) with $\dim H=3$). 
Since $1=|a_1|^2+|a_2|^2+|a_3|^2=3\lambda$, this forces $\lambda =1/3$.
Being $\lambda^{-1} -1= q+q^{-1}$, $q>0$, this leads to $q=1$. With these parameters, according to \cite{ADR01}*{Corollary 3.11, p.31} the Cuntz-Pimsner algebra $\CO_P$ is generated by two elements $s_1$, $s_2$ such that
\begin{equation}\label{first_version_relations}
\begin{cases}
    & s_1s_1^*+s_2s_2^*=1,\\
& -a_1s_1^2+a_2s_2^2=0,\\
&s_i^*s_i=1-s_is_i^*, \qquad i\in\{1, 2\}, \\
&s_2^*s_1=3\bar{a}_1 a_2s_2s_1^*. 
\end{cases}
\end{equation}

Set $z_i:=\sqrt{3}\ a_i$ and $w:=\bar z_1 z_2$ noting that $z_i,w$ must be of modulus one.
Choose a square root $\sqrt{w}$ of $w$ and put $t_i:=\sqrt{w}\ s_i.$
We obtain a similar presentation than above (with $t_1,t_2$ generators) but where all the constant $a_1,a_2,3\bar a_1a_2$ become $1$.
Rather than taking $t_1,t_2$ for generators we take their adjoint $a:=t_1^*$ and $b:=t_2^*$.
This choice of notation is motivated by the so-called Pythagorean algebra $\CP_2:=C^*\langle a,b| a^*a+b^*b=1\rangle$ defined in \cite{Brothier-Jones19}.
A remarkable lifting process permits to functorially take in input a representation of $\CP_2$ and to give in output a representation of the Cuntz algebra $\CO_2$.
Our Cuntz-Pimsner algebra $\CO_P$ has $\CP_2$ as a quotient (for the obvious map). 
We will later interpret representations of $\CO_2$ obtained from representations of $\CO_P$ and the functorial lifting process mentioned above.

\begin{notations} 
In the sequel we adopt the notations. Lowercase letters are employed for the generators of universal C$^*$-algebras, while uppercase letters are reserved for
the images of those generators in a representation. 
The C*-algebra $\CO_P$ is then generated by $a,b$ subject to the conditions
\begin{equation}\label{second_version_relations}
\begin{cases}
&1=a^*a+b^*b=a^*a+aa^*=b^*b+bb^*,\\
& a^2=b^2,\\
&ba^*=b^*a. 
\end{cases}    
\end{equation}
\end{notations}

\section{Computation of the spectrum}

Consider the C*-algebra $\CO_P$ generated by $a$, $b$ subject to the relations \eqref{second_version_relations}.
A representation of $\CO_P$ is a unital continuous *-algebra morphism $\pi:\CO_P\to B(H)$ where $B(H)$ is the algebra of bounded linear operators acting on a complex Hilbert space $H$ with adjoint for *-structure.
Representations are in one to one correspondences with triples $(A,B,H)$ where $H$ is a Hilbert space and $A,B\in B(H)$ satisfy \eqref{second_version_relations}.

A representation $\pi:\CO_P\to B(H)$ is \emph{reducible} if there exists a non-zero proper Hilbert subspace $K\subset H$ that is closed under the action of $\CO_P.$
It is \emph{irreducible} if it is non-zero and not reducible. 
Equivalently, $\pi:\CO_P\to B(H)$ is irreducible if $H\neq \{0\}$ and if the von Neumann algebra generated by $\pi(A)$ and $\pi(B)$ is equal to $B(H).$

Note that all irreducible representations of $\CO_P$ are separable since $\CO_P$ is separable. 
Two representations $\pi:\CO_P\to B(H)$ and $\pi':\CO_P\to B(H')$ are \emph{unitary conjugate} if there exists a unitary transformation $U:H\to H'$ satisfying $U\pi(x)U^*=\pi'(x)$ for all $x\in \CO_P.$

The spectrum $\Spec$ of $\CO_P$ is the set of irreducible representations of $\CO_P$ modulo unitary conjugacy.
If $d$ is a natural number or $\infty$ we write $\Spec(d)$ for the $d$-dimensional part of the spectrum, i.e.~$\Spec(d)$ is the set of irreducible representations of $\CO_P$ of dimension $d$ modulo unitary conjugacy.

If $d$ is finite, then $\Spec(d)$ can be identified with the set $\Irr(d)\subset M_d(\C)\times M_d(\C)$ of pairs of matrices $(A,B)$ of size $d$ satisfying the relations \eqref{second_version_relations} modulo unitary conjugacy 
$$(A,B)\simeq W(A,B)W^*:=(WAW^*,WBW^*)$$ 
where $W\in U(d)$ is a unitary. 
The spectrum comes equipped with a topology. 
This topology coincides with the quotient topology on $\Irr(d)/\simeq$ where $\Irr(d)$ is equipped with the subset topology.
In particular, if $x\mapsto (A(x),B(x))\in \Irr(d)$ is a mapping that is continuous entry-wise, then it defines a continuous mapping valued in $\Spec(d).$

We will often use the following notations for diagonal and anti-diagonal matrices
$$
\antidiag(a, a') = \left(\begin{array}{cc}
0& a\\
a' & 0 \\
\end{array}\right)\text{ and }
\diag(b, b') = \left(\begin{array}{cc}
b & 0\\
0 & b' \\
\end{array}\right)
$$
where the entries may live in any suitable C*-algebra.
The following identities   will come in handy in the sequel.
As they can be readily 
verified, the proof is omitted.
\begin{enumerate}
\item $\antidiag(a,a')\antidiag(b,b')=\diag(ab',a'b)$,
\item $\diag(c,c')\antidiag(a,a')=\antidiag(ca,c'a')$,
\item $\antidiag(a,a')\diag(c,c')=\antidiag(ac',a'c).$
\end{enumerate}

\subsection{One-dimensional representations}

We begin with a complete description of one dimensional representations, which are necessarily irreducible.

\begin{proposition}
Let $\T:=\R/\Z$ be the torus equipped with its usual topology.
The map 
$$\psi:\T\times \{-1,+1\}\to \C^2, \ (z,\varepsilon)\mapsto \dfrac{1}{\sqrt 2}\cdot (z,z\varepsilon)$$
defines a homeomorphism $\T\times \{-1,+1\}\to \Spec(1)$ after identifying $\Spec(1)$ with pairs of complex numbers.
\end{proposition}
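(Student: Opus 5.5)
We solve the relations \eqref{second_version_relations} directly for scalars $a,b\in\C$. In dimension one, unitary conjugacy is trivial, so $\Spec(1)$ is exactly the solution set in $\C^2$ with the subspace topology. Here $\T$ is read as the unit circle of complex numbers, as the formula for $\psi$ requires.

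For scalars the relations read
$$|a|^2+|b|^2=1=2|a|^2=2|b|^2,\qquad a^2=b^2,\qquad b\bar a=\bar b a.$$
The first group gives $|a|=|b|=1/\sqrt2$, so $a=z/\sqrt2$ with $z\in\T$. Next, $a^2=b^2$ gives $b=\varepsilon a$ with $\varepsilon\in\{-1,+1\}$ (note $a\neq 0$). For such $b$ the last relation holds automatically, since then $b\bar a=\varepsilon|a|^2=\bar b a$ because $\varepsilon$ is real. Conversely, every pair $\psi(z,\varepsilon)$ satisfies all the relations. Hence $\psi$ maps onto $\Spec(1)$.

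Injectivity is immediate: $z=\sqrt2\,a$ and $\varepsilon=b/a$ are recovered from the pair. Continuity of $\psi$ is clear. The inverse $(a,b)\mapsto(\sqrt2 a,\,b/a)$ is continuous on the solution set because $a$ stays away from $0$. Alternatively, $\T\times\{\pm1\}$ is compact and $\C^2$ is Hausdorff, so a continuous bijection onto its image is a homeomorphism.

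Nothing here is a real obstacle. The one point to check is the closing remark: the topology on $\Spec(1)$ coincides with the subspace topology on $\Irr(1)\subset\C^2$, since the quotient by unitary conjugacy is trivial in dimension one. This was recalled in the paper just before the statement.
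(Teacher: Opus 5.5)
Your proposal is correct and follows essentially the same route as the paper: solve the scalar relations to get $|a|=|b|=1/\sqrt 2$ and $b=\pm a$, observe that the relation $b\bar a=\bar b a$ is then automatic, and conclude using the identification of $\Spec(1)$ with a subset of $\C^2$ carrying the subspace topology together with the fact that a continuous bijection from a compact space to a Hausdorff space is a homeomorphism. Your explicit continuous inverse $(a,b)\mapsto(\sqrt2\,a,\,b/a)$ is a harmless alternative to that last step. The paper separates inequivalent characters by noting that $\sigma(\pi(a))$ and $\sigma(\pi(b))$ are unitary invariants; your remark that conjugation is trivial in dimension one does the same job.
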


\begin{proof} 
Let $\pi: \CO_P\to \IC$ be a one-dimensional representation, and set
$A:=\pi(a)$, $B:=\pi(b)\in\IC$. 
The condition of \eqref{second_version_relations} 
on $(A,\overline{A})$ and $(B, \overline{B})$ 
being P-pairs 
implies that $|A|=|B|=2^{-1/2}$.
The relation $A^2=B^2$ implies that $B\in \{\pm A\}$.
The condition on $(A, B)$ being a P-pair
imposes no further restrictions.
Conversely, observe that both $(z/\sqrt 2, z/\sqrt 2)$ and $(z/\sqrt 2,-z/\sqrt 2)$ define a one-dimensional representation of $\mathcal O_P$ for each $z\in \IT$.
All these representations are mutually inequivalent as the spectrum of
$\sigma(\pi(a))=\{A\}$ and $\sigma(\pi(b))=\{B\}$ is unitarily invariant.
Hence, $\psi$ does define a bijection $\psi:\T\times\{-1,+1\}\to \Spec(1)$ using the obvious identification.
Finally, observe that the topology of $\Spec(1)$ is the restriction of the topology on $\IC^2$ under our identification, see for instance \cite{Dixmier}*{Section 3.6}.
This implies that $\psi$ is continuous and is moreover closed since its domain is compact and its codomain $\Spec(1)$ is Hausdorff. 
Hence, $\psi$ is a homeomorphism.
\end{proof}

\subsection{Two-dimensional representations}
We explicitly define representatives of the 2-dimensional spectrum of $\CO_P$.

\begin{proposition}
Consider the compact space $X_{00}:=[0,1/\sqrt 2]\times \IT^2$ and the equivalence relation $\sim$ determined by $(0,u,v)\sim (0,u,\hat v)$  and
$(1/\sqrt{2}, u,v)\sim (1/\sqrt{2}, \bar{u}v^2, v)$ for all $u,v,\hat v\in\IT$.
Let $X_0$ be the complement of the two circles $C_\pm:=\{(1/\sqrt 2,u,\pm u): u\in\IT\}$ inside $X_{00}.$
The map 
$$\varphi_{00}:X_{00}\to M_2(\C)^2,\  (p,u,v)\mapsto \left(\begin{pmatrix} 0 & u\sqrt{1-p^2}\\ v\overline u p & 0 \end{pmatrix} , \begin{pmatrix}   0 & vp\\ \sqrt{1-p^2} & 0 \end{pmatrix}\right)$$
is valued in the representations of $\CO_P$. 
Moreover, $\varphi_{00}(p,u,v)$ is irreducible if and only if $(p,u,v)\in X_0$.
Finally, $\varphi_{00}(p,u,v)$ is unitary equivalent to $\varphi_{00}(\hat p,\hat u,\hat v)$ if and only if $(p,u,v)\sim (\hat p, \hat u, \hat v).$
In particular, $\varphi_0$ factorises into a homeomorphism $\varphi:X\to \Spec(2)$ where $X:=X_0/\sim.$
\end{proposition}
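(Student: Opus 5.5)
The plan is to check the four claims in order: the relations, the irreducibility criterion, the equivalence criterion, and finally surjectivity together with the homeomorphism. Throughout write $s:=\sqrt{1-p^2}$, so $\varphi_{00}(p,u,v)=(A,B)$ with $A=\antidiag(us,\,v\bar u p)$ and $B=\antidiag(vp,\,s)$.

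\emph{Relations.} I would verify \eqref{second_version_relations} using the identities (1)--(3) for $\antidiag$ and $\diag$. One gets
$$A^*A=BB^*=\diag(p^2,s^2),\qquad AA^*=B^*B=\diag(s^2,p^2),$$
so every equation in the first line of \eqref{second_version_relations} holds. Next, $A^2=B^2=vps\cdot 1$. Finally both $BA^*$ and $B^*A$ equal $\diag(v\bar u ps,\ \bar v u ps)$. This part is routine.

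\emph{Irreducibility.} I would split into cases according to $p$.
\begin{itemize}
\item If $p<1/\sqrt2$, the eigenvalues $p^2\neq s^2$ of $A^*A$ are distinct, so the commutant of $\{A,B\}$ consists of diagonal matrices. The matrix $B$ has nonzero bottom-left entry $s$, and $A$ has nonzero top-right entry $us$; this remains true at $p=0$. A diagonal matrix commuting with both must therefore be scalar, so the representation is irreducible.
\item If $p=1/\sqrt2$, then $\sqrt2A$ and $\sqrt2B$ are unitaries. A $2$-dimensional representation is reducible iff it splits as a sum of two $1$-dimensional ones. For normal $A,B$ this happens iff $A$ and $B$ commute. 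Computing, $AB=\tfrac12\diag(u,\bar u v^2)$ and $BA=\tfrac12\diag(\bar uv^2,u)$. These agree iff $u^2=v^2$, i.e.\ iff $v=\pm u$, which is exactly the circles $C_\pm$.
\end{itemize}

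\emph{Equivalence.} Again split by $p$.
\begin{itemize}
\item The spectrum of $A^*A$ is a unitary invariant and recovers $p\in[0,1/\sqrt2]$.
\item For $0<p<1/\sqrt2$, any intertwining unitary must preserve the eigenspaces of $A^*A$, so $W=\diag(w_1,w_2)$. Conjugation multiplies the bottom-left entry of $B$ by $w_2\bar w_1$. Since that entry is $s>0$ for both representatives, $w_2\bar w_1=1$, and hence $(u,v)$ is determined.
\item For $p=0$, $v$ does not appear in $\varphi_{00}$ at all, so $(0,u,v)\sim(0,u,\hat v)$ is forced. The diagonal argument still shows that $u$ is determined.
\item For $p=1/\sqrt2$, the invariant $A^2=\tfrac v2$ fixes $v$. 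The unordered pair of eigenvalues of $2AB$, namely $\{u,\bar uv^2\}$, fixes $u$ up to $u\mapsto\bar u v^2$. Conversely, conjugating by the flip $\antidiag(1,1)$ and then by a suitable diagonal unitary, chosen to restore the bottom-left entry of $B$ to $1/\sqrt2$, realises that identification.
\end{itemize}

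\emph{Surjectivity.} I expect this step to be the main obstacle. Take an irreducible pair $(A,B)$ on $\C^2$.

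First suppose $A^*A$ is not scalar. From $A^*A+AA^*=1=A^*A+B^*B$ we get $AA^*=B^*B=1-A^*A$, and similarly $BB^*=A^*A$. Choose a basis with $A^*A=\diag(p^2,1-p^2)$ and $p<1/\sqrt2$. The intertwining identities $A(A^*A)=(AA^*)A$ and $B(B^*B)=(BB^*)B$ show that $A$ and $B$ exchange the two eigenlines, so both are antidiagonal with prescribed moduli of entries. A diagonal unitary then makes the bottom-left entry of $B$ equal to $s>0$. Writing the remaining phases as in $\varphi_{00}$, the relation $A^2=B^2$ ties the product of the phases of $A$ to $v$. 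I would check that $BA^*=B^*A$ is then automatic, or else that it imposes exactly the parametrisation given. When $p=0$ one entry vanishes and the normalisation is done directly.

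Now suppose $A^*A=\tfrac12$. Then $A=U/\sqrt2$ and $B=V/\sqrt2$ with $U,V$ unitary, $U^2=V^2$ and $VU^*=V^*U$. I would diagonalise $V^*U$, which is unitary, and use the relations to show that $V$ swaps its eigenvectors or is scalar. The scalar case contradicts irreducibility. In the swapping case we land in the antidiagonal form with $p=1/\sqrt2$.

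\emph{Homeomorphism.} Continuity of $\varphi_{00}$ is clear, so it induces a continuous map $X_{00}/\!\sim\,\to(\text{pairs})/U(2)$. The source is compact. The target is Hausdorff, being the orbit space of a compact group acting on a Hausdorff space. The induced map is injective by the equivalence step, hence a homeomorphism onto its image. Restricting to the open saturated subset $X_0/\!\sim$, which maps onto the irreducibles by the surjectivity step, gives the homeomorphism $\varphi:X\to\Spec(2)$ for the quotient topology described earlier.
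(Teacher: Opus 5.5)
Your argument is correct and follows the paper's overall structure, but several steps take a different and somewhat cleaner route.

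\textbf{Equivalence.} At $p=1/\sqrt2$ you use the unitary invariants $A^2=\tfrac v2\cdot 1$ and $\sigma(2AB)=\{u,\bar uv^2\}$, instead of solving $WV_0=V_1W$, $WU_0=U_1W$ entrywise. At $p=0$ you reuse the diagonal-intertwiner argument instead of $\det(A+B)$.

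\textbf{Topology.} These invariants work equally well on $C_\pm$. So you obtain injectivity of $X_{00}/\!\sim\ \to M_2(\C)^2/U(2)$ on all of $X_{00}$. This is what lets you replace the paper's closedness argument for $\Theta_0$ (extracting convergent subsequences in $X_{00}\times U(2)$ and checking the limit stays in $X_0$). Instead you use the compact-to-Hausdorff principle, followed by restriction to the open saturated set $X_0/\!\sim$. The one thing to add is that the subspace topology on $\Irr(2)/U(2)\subset M_2(\C)^2/U(2)$ coincides with the quotient topology of $\Irr(2)/\!\simeq$. This holds because orbit maps of group actions are open and $\Irr(2)$ is invariant.

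\textbf{Unitary case of surjectivity.} You exploit the third relation through $W=V^*U$. The paper instead diagonalises $U$, uses only $U^2=V^2$ and $UV\neq VU$, and conjugates explicitly. The paper's route shows incidentally that $ba^*=b^*a$ is automatic for $2$-dimensional irreducibles; yours avoids parametrising $V$.

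\textbf{A non-exhaustive dichotomy.} That last step needs one correction. The dichotomy ``$V$ swaps the eigenlines of $V^*U$, or $V^*U$ is scalar'' does not cover all cases. From $VU^*=V^*U$ and its adjoint one gets $UWU^*=W^*=VWV^*$. Hence both $U$ and $V$ carry the $w$-eigenline of $W$ onto the $\bar w$-eigenline.
\begin{itemize}
\item If $\sigma(W)=\{w,\bar w\}$ with $w\notin\R$, both $U$ and $V$ are antidiagonal, as you want.
\item If $\sigma(W)=\{1,-1\}$ (for instance $U=\Delta(u_1,u_2)$, $V=\Delta(u_1,-u_2)$), both preserve the eigenlines. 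They are then diagonal and commute.
\end{itemize}
The second case is ruled out by irreducibility exactly like the scalar one, so the proof goes through once it is listed.
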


\begin{proof}
Consider $(p,u,v)\in [0,1/\sqrt 2]\times \T\times \T$ and the pair of matrices $(A,B)=\varphi(p,u,v)$ as in the statement.

\textbf{$(A,B)$ defines an irreducible representation of $\CO_P.$}
Note that $A^*A=\Delta(p^2,1-p^2)$ and $B^*B=\Delta(1-p^2,p^2)=1-A^*A$.
Similarly, $AA^*=\Delta(1-p^2,p^2)$ and $BB^*=\Delta(p^2,1-p^2).$
We easily verify that $A^2=B^2$ and $BA^*=B^*A$.
Hence, $(A,B)$ defines a 2-dimensional (matrix) representation of $\CO_P$.
This representation is irreducible if and only if $A,B$ generates $M_2(\C)$ as a von Neumann algebra.
Since $M_2(\C)$ does not contain any proper noncommutative von Neumann subalgebra it is sufficient to check that $AB\neq BA$ to deduce that the representation is irreducible.
Now, $AB=\Delta(u(1-p^2), v^2\overline u p)$ and $BA=\Delta(v^2\overline u p^2 , u (1-p^2)).$
If $AB=BA$, then by comparing moduli we deduce that $p=1/\sqrt 2$ and by comparing phases we deduce that $u^2=v^2.$
Therefore, $AB\neq BA$ unless $(p,u,v)$ is in $C_+\cup C_-=\{(1/\sqrt 2,u,\pm u): u\in\T\}$.

\textbf{The representations are pairwise inequivalent.}
Consider $(p_0,u_0,v_0)$ and $(p_1,u_1,v_1)$ in $X_0$ with associated pairs of matrices $(A_0,B_0)$ and $(A_1,B_1)$, respectively.
Assume that there exists a unitary $W\in U(2)$ satisfying $WA_0W^*=A_1$ and $WB_0W^*=B_1.$

The spectrum of $A_0^*A_0$ is $\{p_0^2,1-p_0^2\}$ implying that $p_0=p_1$.

Assume that $p_0=0$ and thus $(A_0,B_0)=(u_0 e_{12}, e_{21})$ and $(A_1,B_1)=(u_1 e_{12}, e_{21})$ where $e_{ij}$ is the matrix with $1$ at the $(i,j)$th spot and zero elsewhere.
Hence, the $v$-parameter disappears. Moreover, 
if $(A_0,B_0)\sim (A_1,B_1)$, then $A_0+B_0\sim A_1+B_1$ and thus $-u_0=\det(A_0+B_0)=\det(A_1+B_1)=-u_1.$
Therefore, $\varphi_{00}(0,u,v)$ is equivalent to $\varphi_{00}(0,u',v')$ if and only if $u=u'$.

Assume now that $0<p_0<1/\sqrt{2}$. 
Note that $A_0^*A_0=A_1^*A_1=\Delta(p_0^2,1-p_0^2)$ and  $A_0^*A_0=WA_1^*A_1W^*$. 
Since $p_0^2\neq 1-p_0^2$, we deduce that $W$ is diagonal. Moreover, we may choose $W$ up to a phase. It is thus of the form $\Delta(w,1)$ with $w\in \IT.$
Since the bottom-left coefficient of $WB_0W^*=B_1$ is positive real, we deduce that $w=1.$
Therefore, the representations given by $\varphi_{00}(p,u,v)$ along the parameters $0<p<1/\sqrt 2$, $u,v\in\IT$ are pairwise inequivalent.

Assume now that $p_0=1/\sqrt{2}$. 
Set $W=ae_{11}+be_{12}+ce_{21}+de_{22}\in U(2)$, 
$\sqrt{2}A_0=U_0=\nabla(u_0, \overline{u_0}v_0)$,
$\sqrt{2}B_0=V_0=\nabla(v_0, 1)$,
$\sqrt{2}A_1=U_1=\nabla(u_1, \overline{u_1}v_1)$,
$\sqrt{2}B_1=V_1=\nabla(v_1, 1)\in U(2)$.
The condition $WV_0=V_1W$ forces
$v_0=v_1=:v$ and 
$W=ae_{11}+cve_{12}+ce_{21}+ae_{22}$.
The second condition $WU_0=U_1W$ reads as
$$
 \begin{pmatrix} 
cv^2\overline{u_0} & au_0\\
av\overline{u_0} & cu_0
\end{pmatrix}   =
 \begin{pmatrix} 
cu_1 & au_1 \\
av\overline{u_1} & cv^2\overline{u_1}
\end{pmatrix}  
$$
which leads to the equations
$au_1=au_0$, 
$av\overline{u_1}=av\overline{u_0}$, 
$cv^2\overline{u_0}=cu_1$, 
$cu_0=cv^2\overline{u_1}$. At this point, we have proven that if $\varphi_{00}(1/\sqrt 2, u_0,v_0)$ is unitary equivalent to $\varphi_{00}(1/\sqrt 2,u_1,v_1)$, then necessarily $v_0=v_1$ and $u_0u_1=v_0^2.$
Conversely, by conjugating by $W:=\nabla(v,1)$ we deduce that $\varphi_{00}(1/\sqrt 2, u,v)$ is unitary conjugate to $\varphi_{00}(1/\sqrt 2,v^2\overline u , v)$. 

We have proven that $\varphi_{00}(p_0,u_0,v_0)$ and $\varphi_{00}(p_1,u_1,v_1)$ are unitary equivalent if and only if $(p_0,u_0,v_0)\sim (p_1,u_1,v_1)$ for two triples of $X_{00}.$

\textbf{The map $\varphi$ is surjective.}
Let $(A,B)$ be an irreducible 2-dimensional matrix representation of $\CO_P.$
Write $A=U|A|$ and $B=V|B|$ some polar decomposition.
Since $A^*A+AA^*=1$ we obtain that, up to conjugacy, $A^*A=\Delta(p^2,1-p^2)$ for some $p\in [0,1/\sqrt 2].$
Assume for now that $p\neq 1/\sqrt 2$. This implies that $p^2\neq 1-p^2$.
Since $U\Delta(p^2,1-p^2)U^*=\Delta(1-p^2,p^2)$ we deduce that $U$ is necessarily anti-diagonal and thus of the form $\nabla(u_1,u_2)$ where $u_1,u_2\in\T.$
Hence, $A=\nabla(u_1p',u_2p)$ and similarly $B=\nabla(v_1p,v_2 p')$ for some $v_1,v_2\in\T$ where $p':=\sqrt{1-p^2}.$
The equation $A^2=B^2$ implies $u_1u_2=v_1v_2.$
Then conjugating by $\Delta(v_2,1)$ we may set $v_2$ equal to $1.$
Then necessarily $u_2=v_1\overline{u_1}$ that is $(A,B)\sim\varphi_{00}(p,u_1,v_1)$ where $\sim$ means unitary conjugate.

Consider now the case $p=1/\sqrt 2.$
We have that $(A,B)=\dfrac{1}{\sqrt 2}(U,V)$ where $U,V$ are unitaries satisfying $U^2=V^2$ and that $UV\neq VU.$
Up to unitary conjugacy, $U=\Delta(u_1,u_2)$ and thus 
$$V^2=
\left(\begin{array}{cc}
v_{11}^2+v_{12}v_{21} & (v_{11}+v_{22})v_{12}\\
 (v_{11}+v_{22})v_{21} & v_{22}^2+v_{12}v_{21} \\
 \end{array}\right)=
\left(\begin{array}{cc}
u_{1}^2  & 0\\
0 & u_{2}^2 \\
\end{array}\right)=U^2,$$
where $V=(v_{ij})_{ij}$.
Since $UV\neq VU$ the matrix $V$ is not diagonal, hence $v_{21}$ or $v_{12}$ is non-zero and thus $v_{11}+v_{22}=0$.
This implies that $u_1^2=u_2^2$.
Therefore, $U=\Delta(u,-u)$ for some $u\in\T$ (the case $\Delta(u,u)$ is excluded since $UV\neq VU$).
Set $v_{11}=zt$ with $t$ real and $z\in\T$ and thus $v_{22}=-zt.$
Since $V$ is non-diagonal and unitary we get $t\in (-1,1).$
Then $v_{12}=\sqrt{1-t^2} w$ for some $w\in\T$.
Being unitary implies the column vectors of $V$ are orthogonal giving the condition $\overline{v_{21}}zt=\sqrt{1-t^2} w \overline z t.$
Hence, $v_{21}=\sqrt{1-t^2} \overline{w}{z}^2$.
Moreover, $$u^2=(V^2)_{11}=v_{11}^2+v_{12}v_{21}=t^2z^2+ \sqrt{1-t^2}w \sqrt{1-t^2}\overline{w} z^2 =z^2.$$
Therefore, $z=\pm u.$
We deduce that $U,V$ are of the form
$$U=\Delta(u,-u) \text{ and } V=\begin{pmatrix}
\pm ut & w\sqrt{1-t^2}\\
\overline{w}u^2\sqrt{1-t^2} & \mp ut
\end{pmatrix}$$
where $u,w\in\T$ and $t\in (-1,1).$
By conjugating by $\Delta(1,w)$ we do not change $U$ but set the parameter $w$ inside $V$ equal to $1$.
We then conjugate by $\dfrac{1}{\sqrt{2}}\begin{pmatrix}
iu & 1\\
i u & -1
\end{pmatrix}$ obtaining the matrices 
$$\nabla(u,u) \text{ and } \nabla(u\overline v, uv) \text{ where } v=t+i\sqrt{1-t^2}.$$
Finally, by conjugating by $\Delta(uv,1)$ we obtain $\sqrt 2\cdot \varphi_{00}(1/\sqrt 2,u^2v,u^2)$.
This proves that any 2-dimensional irreducible matrix representation of $\CO_P$ is conjugated to a certain $\varphi_{00}(p,u,v).$

\textbf{$\varphi:X\to\Spec(2)$ is a homeomorphism.}
Consider $\varphi_{00}:X_{00}\to M_2(\C)^2$ as defined in the statement of the proposition.
Note that $\varphi_{00}(p,u,v)$ is irreducible unless $p=1/\sqrt 2$ and $v=\pm u.$
Hence, $\varphi_{0}:X_0\to M_2(\C)^2$ takes values in the 2-dimensional \emph{irreducible} matrix representations $\Irr(2)$ of $\CO_P$ (under the obvious identification).
We have proven that $\varphi_0$ is surjective. 
Let
\[
\pi:X_0\longrightarrow X
\qquad\text{and}\qquad
q:\Irr(2)\longrightarrow\Spec(2)
\]
denote the quotient maps. 
By \cite{Dixmier}*{Theorem 3.5.8}, the topology of $\Spec(2)$ coincides with the quotient topology of $\Irr(2)/\simeq$ where $\simeq$ denotes the usual unitary conjugacy. 
Hence, $q$ is a topological quotient map that is closed.
We have shown that $\varphi$ is well-defined bijection. Moreover,
$q\circ\varphi_0$ is continuous, and hence $\varphi$ is continuous by the definition of the quotient topology on $X$.

It remains to prove that $\varphi$ is closed.
Consider 
\[
\Theta_{00}:X_{00}\times U(2)\longrightarrow M_2(\C),\qquad
\Theta(p,u,v,W)=W\varphi_0(p,u,v)W^*
\]
which is continuous and closed since it has compact domain and Hausdorff codomain.
It restricts to a map $\Theta_0:X_{0}\times U(2)\to \Irr(2)$ which is now surjective on the codomain $\Irr(2)$.
Take $Y\subset X_0\times U(2)$ closed and consider a sequence $\Theta_0(p_n,u_n,v_n,W_n)$ in $\Theta_0(Y)$ converging to some $(A,B)\in\Irr(2).$
Up to taking a subsequence, $(p_n,u_n,v_n,W_n)$ converges to some $(p',u',v',W')$ inside $X_{00}\times U(2)$ by compactness.
By continuity $\Theta_0(p',u',v',W')=(A,B).$ 
By the charactisation of irreducible representations of above we deduce that $(p',u',v')\in X_0$ (rather than being in $X_{00}$).
Hence, $\Theta_0(Y)$ is closed and thus the map $\Theta_0$ is closed.

Let's take $Z\subset X$ closed.
Observe that $\varphi(Z)=q(\Theta_0(\pi^{-1}(Z)\times U(2)))$
which is closed because $q$ is a quotient, $\pi$ is continuous and $\Theta_0$ is closed.
Hence, $\varphi$ is closed and thus is a homeomorphism.
\end{proof}

\subsection{Higher dimensional representations are reducible}

Consider a representation $(A,B,H)$ of $\CO_P$ where $\dim(H)\geq 3.$
Let us show that $H$ is reducible. 
Assume for the sake of contradiction that $H$ is irreducible.
Since $\CO_P$ is separable we may assume that $H$ is separable.
Write $A=U|A|$ and $B:=V|B|$ some polar decompositions where $|A|:=\sqrt{A^*A},|B|:=\sqrt{B^*B}$ and $U,V$ are partial isometries.
Having that $(A,A^*)$ is a P-pair means $1=|A|^2+U|A|^2U^*.$
This implies that $|A|$ has its spectrum contained in $[0,1]$ (and similarly for $|B|$). 

\begin{lemma}
The partial isometries $U,V$ can be chosen to be \emph{unitaries}.    
\end{lemma}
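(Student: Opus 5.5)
The plan is to show that the partial isometry $U$ in the polar decomposition $A=U|A|$ can be extended to a unitary that still satisfies $A=U|A|$. This happens exactly when $\ker A$ and $\ker A^*$ have the same dimension. The same argument then applies verbatim to $B$, using $1=b^*b+bb^*$.

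Recall that in the canonical polar decomposition $U$ maps $\overline{\Im |A|}=(\ker A)^\perp$ isometrically onto $\overline{\Im A}=(\ker A^*)^\perp$ and vanishes on $\ker A$. Choose any unitary (or just isometric bijection) $J:\ker A\to \ker A^*$ and set $\tilde U:=U+J P_{\ker A}$. Then $\tilde U$ is a unitary, and $\tilde U|A|=U|A|=A$, because $|A|$ has range orthogonal to $\ker A=\ker|A|$. So the whole point is to produce $J$, that is, to prove $\dim\ker A=\dim\ker A^*$.

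For this I use the relation $A^*A+AA^*=1$.

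\begin{enumerate}
\item If $x\in\ker A$, then $x=AA^*x$, so $x\in \Im A$ and $\|A^*x\|=\|x\|$.
\item Hence $A^*$ restricted to $\ker A$ is isometric.
\item Its image lies in $\ker A^*$, since $A^*A^*=(AA)^*$ and $A^*A^*x$ satisfies $\|A^*A^*x\|^2=\langle AA^*\,A^*x, A^*x\rangle = \|A^*x\|^2-\langle A^*A\,A^*x,A^*x\rangle$.
\end{enumerate}

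A cleaner way to see step 3 is as follows. For $x\in\ker A$ we have $A^*Ax=0$, so $AA^*x=x$. Then $y=A^*x$ satisfies $A^*Ay=A^*AA^*x=A^*x=y$. Therefore $AA^*y=(1-A^*A)y=0$, which gives $A^*y=0$ and so $y\in\ker A^*$. Thus $A^*$ maps $\ker A$ isometrically into $\ker A^*$.

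Symmetrically, for $y\in\ker A^*$ we get $A^*Ay=y$, so $A$ maps $\ker A^*$ isometrically into $\ker A$. Moreover $AA^*x=x$ on $\ker A$ and $A^*Ay=y$ on $\ker A^*$, so these two maps are mutually inverse. They therefore give a unitary $J:=A^*|_{\ker A}:\ker A\to\ker A^*$. This proves the dimension equality even in infinite dimensions, with $H$ separable, and it gives a canonical choice of $J$.

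The main (mild) obstacle is exactly this kernel-matching step. Without it, an operator like the unilateral shift would prevent a unitary polar part, and it is the relation $A^*A+AA^*=1$ (and $B^*B+BB^*=1$) that excludes it. Finally, I would note that with this choice, $\tilde U=U+A^*P_{\ker A}$ is unitary and $A=\tilde U|A|$, and that likewise $B=\tilde V|B|$ with $\tilde V$ unitary.
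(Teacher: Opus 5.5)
Your proof is correct, and it takes a different, more economical route than the paper.

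\textbf{The paper's route.} It proves $\dim\ker A=\dim\ker A^*$ by playing $A$ against $B$.
\begin{enumerate}
\item From $BB^*=A^*A$ and $B^*B=AA^*$ it gets $\ker A=\ker B^*$ and $\ker B=\ker A^*$.
\item It shows that $\ker(A^2)=\ker(B^2)$ is invariant under $A,A^*,B,B^*$. Irreducibility then forces either injectivity of all four operators, or $A^2=B^2=0$.
\item In the latter case it derives $A^*B=AB^*=0$ and uses $B|_{\ker A}$ and $B^*|_{\ker A^*}$ as isometries between the two kernels.
\end{enumerate}

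\textbf{Your route.} You use only the single relation $A^*A+AA^*=1$. It gives $\ker A=\{x:\ AA^*x=x\}$ and $\ker A^*=\{y:\ A^*Ay=y\}$. Hence $A^*|_{\ker A}$ and $A|_{\ker A^*}$ are mutually inverse isometries between the two kernels. Your ``cleaner'' verification of this is right. The computation in your item 3 does not by itself conclude anything and can simply be dropped.

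\textbf{What each approach buys.} Your argument needs neither irreducibility, nor the relations $a^2=b^2$ and $ba^*=b^*a$, nor any case distinction. So the lemma holds in every representation of $\CO_P$, indeed for any operator with $A^*A+AA^*=1$. It also produces a canonical unitary extension $\tilde U=U+A^*P_{\ker A}$. The paper's route instead yields extra structural information about irreducible representations: the identifications $\ker A=\ker B^*$, $\ker B=\ker A^*$, and the dichotomy ``all injective or $A^2=B^2=0$''. That information is not used later, however.

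Finally, separability of $H$ plays no role in your dimension count. A unitary between the kernels already gives equality of their Hilbert dimensions.
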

\begin{proof}
We will show that $A,A^*,B,B^*$ have their kernel with same dimension.
This implies the lemma via the standard result: $U$ can be chosen to be unitary if and only if the kernels of $A$ and $A^*$ have same dimension, see for instance \cite{Pedersen}*{Remark 3.2.18}. 
For the sake of clarity we divide the proof into a series of steps.
We will repeatedly make use of $BB^*=A^*A$ and $B^*B=AA^*$.

\textbf{It holds $\ker (A)=\ker (B^*)$, $\ker (B)=\ker (A^*)$.} Indeed, this holds regardless of the irreducibility hypothesis and follows from the following chain of equalities
\begin{align*}
&\| Av\|^2 =\langle A^*Av,v\rangle = \langle BB^*v,v\rangle = \| B^*v\|^2,\\
&\| Bv\|^2 =\langle B^*Bv,v\rangle = \langle AA^*v,v\rangle = \| A^*v\|^2.
\end{align*}

\textbf{The subspace $\ker (A^2)=\ker (B^2)$ is a subrepresentation of $\CO_P$.} Indeed, take $v\in \ker (B^2)$. Clearly $Bv$ is in $\ker (B^2)$. Moreover, 
\begin{align*}
B^2(B^*v)= & BBB^*v= Bv-BB^*Bv=Bv-Bv+B^*B^2v=0.
\end{align*}
The case $Av$ and $A^*v$ is similar. 

By irreducibility, at this point we know that either
$\ker (A^2)=\ker (B^2)=0$ or
$A^2=B^2=0$. In the former case, $A$, $A^*$, $B$, $B^*$ are all injective. 
For the remainder of the proof we work under the assumption $A^2=B^2=0$.

\textbf{If $A^2=B^2=0$, then $A^*B=0$ and $AB^*=0$.} 
Let us show that $H$ is in the kernel of $A^*B$. We know that the completion of the range of $B^*$ and the kernel of $B$ span $H$. 
Hence, it is sufficient to show that $\ker(B)\subset \ker(A^*B)$ and $\Im(B^*)\subset \ker(A^*B).$
The first inclusion is obvious. 
Take $B^*(v)$ in $\Im(B^*)$ and observe that $A^*BB^*v=A^*A^*Av=(A^2)^*Av=0.$
Hence, $A^*B=0$ and a similar proof shows that $AB^*=0$.

\textbf{If $A^2=B^2=0$, then $\dim \ker (A) = \dim\ker (A^*)$}. 
Using that $(A,B)$ and $(A,A^*)$ are P-pairs we deduce that the restriction of $B$ to $\ker(A)$ and $B^*$ to $\ker(A^*)$ are isometric.
Moreover, the previous paragraph implies that they are valued in $\ker(A^*)$ and in $\ker(A)$, respectively.
This forces to have $\dim\ker(A)=\dim\ker(A^*).$
The first step proved that $\ker(A)=\ker(B^*)$ and $\ker(A^*)=\ker(B)$ and thus the proof is complete.
\end{proof}

Consider a subset $I\subset [0,1]$ that is measurable for the spectral measure of $|A|:=\sqrt{A^*A}$ so that $t\in I$ implies $\sqrt{1-t^2}\in I$ and that has non-zero measure.
The associated spectral projection $P_I$ is then non-zero.
The condition $1=|A|^2+U|A|^2U^*$ implies that $U$ commutes with $P_I.$
Obviously $|A|$ commutes with $P_I$ and thus so does $|B|$ since $|B|=\sqrt{1-|A|^2}.$
Using that $1=|B|^2+V|B|^2V^*$ we deduce that $VP_I=P_IV.$
Hence, $P_I\neq 0$ commutes with both $A$ and $B$ and thus must be the identity by irreducibility.
Similarly, if $J\subset I$ is a measurable subset, then either $P_J=P_I=1$ or $P_J=0.$
This implies that the spectrum of $|A|$ is either the singleton $\{1/\sqrt 2\}$ or two points $\{t,\sqrt{1-t^2}\}$ for a given $0\leq t<1/\sqrt 2.$
In particular, the spectral values of $|A|$ are \emph{eigenvalues}.

Assume that $|A|$ has a unique spectral value which must be $1/\sqrt 2$.
This means that $A=U/\sqrt 2$ and $B=V/\sqrt 2$ for some unitaries $U,V.$
The assumption $A^2=B^2$ provides $U^2=V^2.$
We want to show that $U,V$ do not generate $B(H)$ as a von Neumann algebra.
Set $Q_I$ for the spectral measure associated to $U$ and a measurable subset $I\subset \T$ with non-zero spectral measure.
Assume if $z\in I$, then $-z\in I$.
We deduce that $Q_I$ commutes with $U^2=V^2.$ 
Since $I$ is closed under $z\mapsto -z$ we moreover deduce that $Q_I$ commutes with $V$.
Irreducibility implies that $Q_I=1$. 
A similar argument than above yields that the spectrum of $U$ is $\{z,-z\}$ for a certain $z\in\T$.
In particular, $U^2$ is $z^2$ times the identity.
Up to rescaling, we may assume that $U^2=V^2=1.$
Therefore, $U,V$ are symmetries and up to taking $(U+1)/2$ and $(V+1)/2$ we are reduced to show that two projections do not generate $B(H)$ (where recall here $\dim(H)\geq 3$).
This follows from a classical result of Halmos, see \cite{Halmos69}*{p. 386}.

Assume we are in the second case where $|A|$ has for spectrum $\{ t,\sqrt{1-t^2} \}$ for a fixed $0\leq t<1/\sqrt 2$. 
For convenience we write $t':=\sqrt{1-t^2}.$
The Hilbert space $H$ decomposes as $H_t\oplus H_{t'}$ where $|A|\xi=t\xi$ and $|B|\xi=t'\xi$ for all $\xi\in H_t$ and $|A|\eta=t'\eta, |B|\eta=t\eta$ for all $\eta\in H_{t'}.$
Moreover, $U(H_t)=V(H_t)=H_{t'}, U(H_{t'})=V(H_{t'})=H_t$.
Write $K$ for a copy of $H_t$ (and thus $H_{t'}$).
We may then identify $B(H)$ with $M_2(\C)\otimes B(K)$ and interpret $|A|, |B|, U, V$ as matrices of the form:
$$|A|=\begin{pmatrix}t & 0 \\ 0 & t'\end{pmatrix}, |B|=\begin{pmatrix}t' & 0 \\ 0 & t\end{pmatrix}, U=\begin{pmatrix} 0 & W \\ X & 0\end{pmatrix}, V=\begin{pmatrix} 0 & Y \\ Z & 0\end{pmatrix}$$
where $W,X,Y,Z$ are unitaries of $K$ and where $t,t'$ stand for the identity of $K$ times $t,t'$, respectively.
Up conjugating by $\Delta(W,1)$, we may assume that $W$ is the identity of $K$ in the definition of $U.$
The equation $A^2=B^2$ translates into $tt'X=tt'YZ$ and $tt'X=tt'ZY$.

If $t\neq 0$, then $X=YZ=ZY.$
We deduce in this case that the von Neumann algebra $N$ generated by $X,Y,Z$ is commutative and is thus a proper subalgebra of $B(K)$ since $\dim(K)>1$ by assumption.
Note that the von Neumann algebra generate by $A,B$ is contained in $M_2(\C)\otimes N$.
Therefore, $A,B$ do not generated $B(H)\simeq M_2(\C)\otimes B(K)$ as a von Neumann algebra and thus $(A,B,H)$ is a reducible representation.

Assume now that $t=0.$
We deduce that $A=\nabla(1,0)$ and $B=\nabla(0,Z)$ and thus the von Neumann algebra $\{A,B\}''$ generated by $A,B$ is contained in $M_2(\IC)\otimes M$ where $M$ is generated by the single unitary $Z$. We again deduce that $\{A,B\}'' \neq B(H)$ and thus the representation $(A,B,H)$ is reducible.

We have proven that any representation $H$ of $\CO_P$ of dimension at least $3$ is reducible.

\subsection{Computation of the spectrum}\label{sec:proof-main}

We may now easily deduce our main theorem.

\begin{proof}[Proof of Theorem \ref{maintheo}]
We have seen that the map $(u,\varepsilon)\mapsto \dfrac{1}{\sqrt 2}(u,\varepsilon u)$ defines a homeomorphism $\psi:\IT\times\{-1,+1\}\to\Spec(1).$

Consider $X_{00}=[0,1/\sqrt 2]\times \T^2$ and the map 
$$\varphi_{00}:X_{00}\to M_2(\C)^2,\  (p,u,v)\mapsto \left(\begin{pmatrix} 0 & u\sqrt{1-p^2}\\ v\overline u p & 0 \end{pmatrix} , \begin{pmatrix}   0 & vp\\ \sqrt{1-p^2} & 0 \end{pmatrix}\right).$$
Recall that $X_0=X_{00}\setminus(C_+\cup C_-)$ where $C_{\pm}:=\{(1/\sqrt 2, u,\pm u):\ u\in\IT\}$ and $X:=X_0/\sim$ where $(0,u,v)\sim (0,u,\hat v)$ and $(1/\sqrt 2, u,v)\sim(1/\sqrt 2, \overline u v^2,v)$ for all $u,v,\hat v\in\IT.$

We have seen that $\varphi_{00}(p,u,v)\in \Irr(2)$ (i.e.~defines an irreducible representation of $\CO_P$) unless $(p,u,v)\in C_+\cup C_-$ and that two of them are inequivalent they are in the same $\sim$-class.
Moreover, $\varphi_{00}$ factorises into a homeomorphism $\varphi:X\to\Spec(2).$

We have proven that if $(A,B,H)$ is a representation of $\CO_P$ so that $\dim(H)>2$, then $H$ is reducible.
Hence, $\Spec=\Spec(1)\sqcup \Spec(2)$ as a set (but not as a topological space).

We do have that $\Spec(1)$ and $\Spec(2)$ are individually Hausdorff.
However, the whole spectrum $\Spec$ is not.
Indeed, consider the point $(1/\sqrt 2, u^2,u^2)$ which is in $X_{00}$ but is not in $X_0.$
It defines a reducible representation of $\CO_P$ of dimension 2.
It is unitary equivalent to the direct sum of two characters (i.e.~representations of dimension one) that are $\psi(u,+1)$ and $\psi(-u,+1).$
This implies that if $\Omega,\Omega'$ are neighbourhoods of $\psi(u,+1),\psi(-u,+1)$, respectively, then $\Omega\cap\Omega'$ always contains an element of $\Spec(2)$ of the form $\varphi(p,u^2,u^2)$ with $p$ sufficiently close to $1/\sqrt 2.$
Hence, the two points $\psi(u,+1)$ and $\psi(-u,+1)$ are not separated in $\Spec$ even if they are separated in $\Spec(1).$
A similar method shows that the net $\varphi(p,u^2,-u^2)\in\Spec$ (with $0<p<1/\sqrt 2)$ has for limit points $\psi(iu,-1)$ and $\psi(-iu,-1)$ when $p$ tends to $1/\sqrt 2$. Hence, $\psi(iu,-1)$ and $\psi(-iu,-1)$ are not separated inside $\Spec.$
\end{proof}

\subsection{Representations of Richard Thompson's groups and of the Cuntz algebra}\label{sec:FTVO}

We relate representations of $\CO_P$ with representations of the Thompson groups $F,T,V$ and representations of the (binary) Cuntz algebra $\CO_2$ \cite{CFP, Cuntz}.
Jones and the second author have defined a functorial process that takes in entry a pair of operators $A,B$ satisfying that $A^*A+B^*B=1$, and gives in output a representation $\pi_{A,B}$ of the Cuntz algebra $\CO_2$ with two generators \cite{Brothier-Jones19}. 
Using the Birget-Nekrashevych embedding of the three Thompson groups $F\subset T\subset V$ inside the unitary group of $\CO_2$ we may then restrict $\pi_{A,B}$ and obtain unitary representations of $F,T,V$ \cite{Birget03,Nekrashevych03}. 
Hence, any representation of the so-called Pythagorean algebra 
$$\CP_2:=C^*\langle a,b| a^*a+b^*b=1\rangle$$
yield a representation of $\CO_2$ and then unitary representations of $F,T,V.$
Now, observe that our Cuntz-Pimsner algebra $\CO_P$ is a quotient of $\CP_2$ (under the obvious mapping).
Hence, any representation $(A,B,H)$ of $\CO_P$ produces a representation $\pi_{A,B}$ of $\CO_2$ which restricts to unitary representations of $F,T,V$.
We wonder what kind of representations are obtained.

In a series of articles, Wijesena and the second author have continued to study the procedure $(A,B,H)\mapsto \pi_{A,B}$ \cite{Brothier-Wijesena25, Brothier-Wijesena26, Brothier-Wijesena24}.
In general, this procedure does not preserve irreducibility nor unitary conjugacy.
However, if we moreover require that $H$ is finite dimensional and does not contain any subspace closed under $A$ and $B$ (which is stronger than asking irreducibility of $H$ since we do not consider the adjoints $A^*,B^*$), then $\pi_{A,B}$ is an irreducible representation of $\CO_2$.
Under these assumptions, two such representations $(A,B,H)$ and $(A',B',H')$ are unitary conjugate if and only if the associated $\pi_{A,B}$ and $\pi_{A',B'}$ are unitary conjugate.
Additionally, the restrictions of $\pi_{A,B}$ to the Thompson groups $F,T,V$ are all irreducible unless in the specific case: $\dim(H)=1$ and $A$ or $B$ is zero.
In dimension one, the representations of $\CP_2$ are all irreducible and their classes are in bijection with the real 3-sphere $\{(x,y)\in\C^2:\ |x|^2+|y|^2=1\}.$
In dimension two, it was proven that the set of classes of $(A,B,H)$ with the condition of above was in bijection with a real smooth manifold of dimension $9$. 
This manifold seats inside the spectrum of $\CO_2$ via the procedure $(A,B,H)\mapsto \pi_{A,B}.$

The C*-algebra $\CO_P$ admits for representation the pair $(1/\sqrt 2,1/\sqrt 2)$ acting on $\C$. 
The restriction to $F$ (resp.~$T,V$) of $\Pi(1/\sqrt 2,1/\sqrt 2)$ provides the Koopman representation of the usual action of $F$ on $[0,1]$ (resp.~$T,V$).
Now, if $u$ is of modulus $1$, then the representation $(u/\sqrt 2,u/\sqrt 2)$ of $\CO_P$ corresponds to the $u$-deformation of the Koopman representation obtained by Garncarek in \cite{Garncarek12}.
Hence, $\Spec(1)$ of $\CO_P$ corresponds to the one circle deformation of the Koopman representation of $F$ and a second circle. 
These two circles live in the 3-sphere $S^3.$
These representations are diffuse in the sense of \cite{Brothier-Wijesena25}.
This means that the representations of $F,T,V$ are Ind-mixing: they are never conjugate to the induced representation of a finite dimensional representation. 
In particular, they are weakly mixing but note that they are never mixing.

Consider now $\Spec(2)$ and a representative $\varphi_{00}(p,u,v)=(A,B)$ of an element of it.
First, observe that $\C^2$ does not admit any line that is preserved by $A$ and $B$.
This implies that $\pi_{A,B}\in\Rep(\CO_2)$ is irreducible and also the restrictions to $F,T,V$ are irreducible.
When $p\neq 0$, all these representations are diffuse. Hence, their restrictions to $F,T,V$ are Ind-mixing.
Now, if $p=0$ and $u=1$, then we recognise the representation of $\CP_2$ appearing in \cite[Section 6.1]{Brothier-Jones19}.
Hence, $\pi_{A,B}$ restricted to $F$ is the quasi-regular representation associated to the parabolic subgroup $F_{1/3}$ which fixes the point $1/3$ for the usual action $F\curvearrowright [0,1]$. 
A similar statement occurs when $p=0$ and $u\neq 1$ where the quasi-regular representation is replaced by a monomial representation associated to a character of $F_{1/3}$. 
Also, similar statements hold when considering $T,V$ rather than $F.$
Hence, the image of $\Spec(2)$ by the procedure $(A,B,H)\mapsto \pi_{A,B}$ is a piece of spectrum of $\CO_2$ contained in a manifold of dimension 3 (living in a manifold of dimension $9$) composed of diffuse representations and a circle of atomic representations (those which restrict to monomial representations associated to $F_{1/3}$).

\section*{Acknowledgements} 
 V.A. is partially supported by Sapienza Universit\`a di Roma (Progetto di Ateneo Dipartimentale 2024 “New research trends in Mathematics at Castelnuovo”)
 and  by  INdAM-GNAMPA  
through the program "Partecipazione a Convegni, Scuole, Workshop e
Cicli di Seminari" and by the INdAM-GNAMPA  project
“Simmetrie distribuzionali per processi stocastici quantistici” CUP E53C25002010001.

\end{document}